\newtheorem{thm}{\bf Theorem}
\newtheorem{lem}{\bf Lemma}
\newtheorem{cor}{\bf Corollary}
\newtheorem{pro}{\bf Proposition}
\theoremstyle{remark}
\theoremstyle{definition}
\begin{document}
\title{Convergence of weighted averages of relaxed projections}
\author{Ryszard Szwarc}
\date{}

\address{Institute of Mathematics, University of Wroc\l aw, pl.\ Grunwaldzki 2/4, 50-384 Wroc\l aw, Poland
\newline   and \newline \indent Institute of Mathematics and Computer Science, University of Opole, ul. Oleska 48,
45-052 Opole, Poland}
 \email{szwarc2@gmail.com}
\thanks{Supported by European Commission Marie Curie Host
Fellowship for the Transfer of Knowledge ``Harmonic Analysis, Nonlinear Analysis and
Probability''  MTKD-CT-2004-013389
 and by
MNiSW Grant N201 054 32/4285.}

\keywords{convex set, weighted projections, relaxation} \subjclass[2000]{Primary
49M45}
\begin{abstract}
The convergence of the algorithm for solving convex feasibility problem is studied
by the method of   sequential averaged and relaxed projections. Some results of
H. H. Bauschke and J. M. Borwein are generalized by introducing new methods.
Examples illustrating these generalizations are given.
\end{abstract}
\maketitle
\section{Introduction and preliminaries}
Many problems in applied mathematics deal with finding a point in the intersection of
a family of convex sets in Euclidean or Hilbert space. The solution can be achieved in algorithmic way
  as a limit of composition
of projection onto these convex sets. Due to its importance the problem has been studied heavily
for many years. We refer to \cite{BB} where the reader can find an extensive account of
theorems and literature related to the problem, as well as a general   approach which captures the
earlier methods and results. In the present work we will generalize, in case of finite dimension,
one of the main theorems
of \cite{BB}, concerning the convergence of algorithm.
 Firstly we will improve the estimate   for the average of
relaxed  projections, secondly we will admit repetitive control, and lastly we
will make use of  perturbation theorem,  which  allows to ignore projections with
small weight coefficients. The first generalization  is valid also for infinite dimensional inner product spaces.
All  these was possible by applying new techniques
and new proofs as well.

   For a closed convex set $C\subset
 \mathbb{R}^d$ let $P_C$ denote the projection onto $C.$ For $x\in   \mathbb{R}^d$ the symbol
$d(x,C)$ will denote the distance from $x$ to $C.$   Assume we are dealing
with a fixed finite family of closed convex sets $C_1,\ C_2,\ldots, C_N.$ For a sequence
of relaxation parameters $\alpha_1,\ldots,\alpha_N$ such that $0\le \alpha_i\le 2$ and
numbers $\lambda_1,\ldots,\lambda_N$ such that $0\le \lambda_i\le 1,$ $\sum_{i=1}^N\lambda_i=1$ we will
consider the weighted averages
$$ \sum_{j=1}^N\lambda_j\{(1-\alpha_j)I+\alpha_jP_{C_j}\}=
 I-\sum_{j=1}^N \lambda_j\alpha_j (I-P_{C_j}).$$ It is well known that every operator $P_{C_j}$ is firmly nonexpansive
 (see \cite[Facts 1.5]{BB}), thus
 these weighted averages are nonexpansive as well.
Since the  expressions depend only on the products $\lambda_i\alpha_i$ we will introduce
the set
$$\mathcal B=\left \{\beta=(\beta_1,\ldots,\beta_N)\,\ \left|\, \beta_i\ge 0,\ \sum_{i=1}^N\beta_i\le 2\right.\right \}$$
and define for $\beta\in \mathcal B$
\begin{equation}\label{one}
Q_\beta=I-\sum_{j=1}^N \beta_j (I-P_{C_j}).
\end{equation}
\noindent{\bf Remark.}
Observe that for any $\beta\in \mathcal B$ there exist relaxation parameters $\alpha_1,\ldots,\alpha_N$ and
average parameters $\lambda_1,\ldots,\lambda_N$ such that $\beta_i=\lambda_i\alpha_i.$ Indeed, if
$\sum_{k=1}^N\beta_k>0$ we may set $\alpha_i= \sum_{k=1}^N\beta_k $ and
$\lambda_i=\beta_i\left(\sum_{k=1}^N\beta_k\right )^{-1}.$ On the other hand if $\beta_i=0$ for any $i$ we take
$\alpha_i=0$  and $\lambda_i=1/N $ for any $i.$ Therefore every operator $Q_\beta$ is   nonexpansive.
\noindent{\bf Remark.} All the results in this work remain valid if we replace $P_{C_i}$ with a firmly nonexpansive mappings
$T_i$ such that $T_i(c)=c$ for $c\in C_i$ (see \cite[p. 370]{BB}).
\section{Auxiliary results}
 \begin{pro}
For any $x\in \mathbb{R}^d$ and any $c\in C_1\cap C_2\cap\ldots \cap C_N$ we have
$$\|Q_{\beta}(x)-c\|^2\le \|x-c\|^2-\sum_{j=1}^N
 (2 -{\beta_j\over \kappa_j}  ) \beta_j \|x-P_{C_j}(x)\|^2,
$$ where $\kappa_1,\kappa_2,\ldots,\kappa_N$ are any nonnegative numbers such that
$\sum_{j=1}^N\kappa_j~=~1.$
\end{pro}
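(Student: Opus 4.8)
The plan is to expand $\|Q_\beta(x)-c\|^2$ directly from the definition \eqref{one} and to control the two resulting cross terms separately: the linear one via the firm nonexpansiveness of each $P_{C_j}$, and the quadratic one via convexity of the squared norm weighted by the numbers $\kappa_j$.

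First I would set $d_j:=x-P_{C_j}(x)$, so that $Q_\beta(x)-c=(x-c)-\sum_{j=1}^N\beta_j d_j$ and therefore
$$\|Q_\beta(x)-c\|^2=\|x-c\|^2-2\sum_{j=1}^N\beta_j\langle x-c,\,d_j\rangle+\Bigl\|\sum_{j=1}^N\beta_j d_j\Bigr\|^2.$$
For the middle term I would use that $P_{C_j}$ is firmly nonexpansive (cf.\ \cite[Facts 1.5]{BB}): since $c\in C_j$, the standard variational characterization of the projection gives $\langle x-P_{C_j}(x),\,x-c\rangle\ge\|x-P_{C_j}(x)\|^2$, and because $\beta_j\ge0$ this yields $-2\sum_j\beta_j\langle x-c,d_j\rangle\le-2\sum_j\beta_j\|d_j\|^2$.

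For the last term I would apply Jensen's inequality to the convex function $u\mapsto\|u\|^2$ with weights $\kappa_1,\dots,\kappa_N$. Writing $\beta_j d_j=\kappa_j\bigl((\beta_j/\kappa_j)d_j\bigr)$ and using $\sum_j\kappa_j=1$ gives
$$\Bigl\|\sum_{j=1}^N\beta_j d_j\Bigr\|^2=\Bigl\|\sum_{j=1}^N\kappa_j\,(\beta_j/\kappa_j)d_j\Bigr\|^2\le\sum_{j=1}^N\kappa_j\,\|(\beta_j/\kappa_j)d_j\|^2=\sum_{j=1}^N\frac{\beta_j^2}{\kappa_j}\,\|d_j\|^2.$$
Substituting both estimates into the expansion and collecting the coefficient of each $\|d_j\|^2$ gives $-2\beta_j+\beta_j^2/\kappa_j=-\bigl(2-\beta_j/\kappa_j\bigr)\beta_j$, which is exactly the asserted inequality.

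I do not expect a serious obstacle; the argument is essentially a short computation once these two tools are chosen. The only points requiring care are the direction in which the firm-nonexpansiveness inequality is applied, and the degenerate cases in the Jensen step: if some $\kappa_j=0$ with $\beta_j>0$ the right-hand side is $+\infty$ and there is nothing to prove, while if $\kappa_j=0$ and $\beta_j=0$ that index contributes nothing and may be dropped, so one may assume all $\kappa_j>0$ from the outset. Note also that the hypothesis $\sum_j\beta_j\le2$ is not needed for the inequality itself; it only becomes relevant later, when one wants to choose the $\kappa_j$ so that each coefficient $2-\beta_j/\kappa_j$ is nonnegative.
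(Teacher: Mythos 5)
Your proposal is correct and follows essentially the same route as the paper's proof: expand the square, bound the cross term by $\langle x-c,\,x-P_{C_j}(x)\rangle\ge\|x-P_{C_j}(x)\|^2$ (which the paper derives from $\langle P_{C_j}(x)-c,\,x-P_{C_j}(x)\rangle\ge 0$ after normalizing $c=0$), and apply Jensen's inequality with weights $\kappa_j$ to the quadratic term. Your remarks on the degenerate cases $\kappa_j=0$ match the paper's own convention stated after the proposition.
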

\noindent{\bf Remark.} We set $ \beta_j/\kappa_j=0$ whenever
$ \beta_j=0.$ If $\beta_j>0$ and $\kappa_j=0$ we set
$\beta_j/\kappa_j=+\infty.$
\begin{proof} With no loss of generality we may assume that $c=0.$ Then
by the convexity of each set $C_j$ we have
$$\langle P_{C_j}(x), x-P_{C_j}(x)\rangle\ge 0.$$ Hence
\begin{multline}
\|Q_{\beta}x \|^2=
\left \|x-\sum_{j=1}^N\beta_j(x-P_{C_j}(x))\right \|^2\\
=\|x \|^2+\left \|\sum_{j=1}^N\beta_j(x-P_{C_j}(x)) \right \|^2
 -2\sum_{j=1}^N\beta_j\langle x, x-P_{C_j}(x)\rangle\\
 =\|x \|^2+\left \|\sum_{j=1}^N\beta_j(x-P_{C_j}(x)) \right \|^2
 -2\sum_{j=1}^N\beta_j\,\|x-P_{C_j}(x)\|^2 \\
 -2\sum_{j=1}^N\beta_j\,\langle P_{C_j}(x), x-P_{C_j}(x)\rangle
\end{multline}
$$\le \|x \|^2+\left \|\sum_{j=1}^N\beta_j(x-P_{C_j}(x)) \right \|^2
 -2\sum_{j=1}^N\beta_j\,\|x-P_{C_j}(x)\|^2$$

Let $\kappa_1,\kappa_2,\ldots, \kappa_N$ satisfy the assumptions. Then by the convexity
of the function $x\mapsto\|x\|^2$ we obtain
\begin{multline}
\left \|\sum_{j=1}^N\beta_j(x-P_{C_j}(x)) \right \|^2=\left
\|\sum_{j=1}^N\kappa_j\kappa_j^{-1}\beta_j(x-P_{C_j}(x)) \right \|^2\\
\le \sum_{j=1}^N {\beta_j^2\over \kappa_j }\|x-P_{C_j}(x)\|^2.
\end{multline}
Combining (1) and (2) concludes the proof.
\end{proof}
\noindent{\bf Remark.} Setting
$$\kappa_j={\beta_j  \over
\sum_{k=1}^N\beta_j}
$$ implies
\begin{equation}
\|Q_{\beta}(x)-c\|^2\le \|x-c\|^2-  \left(2- {\textstyle\sum_{k=1}^N{\beta_k }}
  \right ) \sum_{j=1}^N \beta_j\|x-P_{C_j}(x)\|^2
\end{equation}
the inequality obtained in \cite[Lemma 3.2(ii)]{BB}.
 \begin{thm}Given   a family of convex
sets $C_1,\ldots, C_N$ with nonempty intersection $C.$ Let $\beta\in \mathcal B $
  and   $I\subset \{1,2,\ldots, N\}.$  Then for any $x\in \mathbb{R}^d$
 and any $c\in C$ we have
$$  \|Q_{\beta}(x)-c\|^2\le \|x-c\|^2- \min_{i\in I}{\nu_i}\,\max_{i\in I}
  d^2(x,C_i),$$
where
$$\nu_i={2\beta_i \left (2-\sum_{k=1}^N\beta_k\right )\over
 \beta_i +2-\sum_{k=1}^N \beta_k}.$$
 In particular the inequality holds when $I$ is the set of active indices, i.e.
 \begin{equation}
 I = \{ i\ |\ 1\le i\le N,\ \beta_i>0\}
 \end{equation}
\end{thm}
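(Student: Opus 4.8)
The plan is to deduce this estimate, which involves the \emph{maximum} of the squared distances over $I$, from a family of single-index estimates of the shape
$$\|Q_\beta(x)-c\|^2\le \|x-c\|^2-\nu_i\,d^2(x,C_i),\qquad i\in I,$$
one for each $i$, and then to pick the index $i^*\in I$ realizing $\max_{i\in I}d^2(x,C_i)$. Indeed, each such inequality trivially implies the weaker one obtained by replacing $\nu_i$ by $\min_{k\in I}\nu_k$ (both are nonnegative and $d^2(x,C_i)\ge 0$), so evaluating it at $i=i^*$ gives exactly $\|Q_\beta(x)-c\|^2\le \|x-c\|^2-\min_{k\in I}\nu_k\,\max_{i\in I}d^2(x,C_i)$. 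Thus the whole theorem reduces to the single-index inequality.

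To establish the single-index inequality for a fixed $i$, I would apply the Proposition with a carefully chosen weight vector $(\kappa_1,\dots,\kappa_N)$; write $S=\sum_{k=1}^N\beta_k\le 2$ and recall $d^2(x,C_j)=\|x-P_{C_j}(x)\|^2$. The idea is to annihilate every off-diagonal contribution: setting $\kappa_j=\beta_j/2$ for $j\ne i$ makes $2-\beta_j/\kappa_j=0$, so each such term of the sum in the Proposition vanishes, and this forces $\kappa_i=1-\tfrac12(S-\beta_i)=\tfrac12(2-S+\beta_i)$. I then check this is a legitimate choice of weights: $\kappa_i\ge 0$ since $S\le 2$; $\kappa_i\le 1$ since $\beta_i\le S$; the remaining $\kappa_j$ are nonnegative; and $\sum_j\kappa_j=1$ by construction. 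With this choice the Proposition collapses to
$$\|Q_\beta(x)-c\|^2\le \|x-c\|^2-\Bigl(2-\frac{\beta_i}{\kappa_i}\Bigr)\beta_i\,d^2(x,C_i),$$
and a one-line simplification gives $(2-\beta_i/\kappa_i)\beta_i=2\beta_i(2-S)/(\beta_i+2-S)=\nu_i$, which is the single-index inequality. The degenerate cases $\beta_i=0$ or $S=2$ are covered automatically, since then both sides reduce to the nonexpansiveness of $Q_\beta$ (here $\nu_i=0$).

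The only genuinely delicate point is \emph{finding} the weight vector: one wants the weight on index $i$ as large as possible, so as to maximize the coefficient $(2-\beta_i/\kappa_i)\beta_i$ of $d^2(x,C_i)$, subject to keeping every other coefficient nonnegative so that those terms may be discarded; the extremal choice $\kappa_j=\beta_j/2$ is precisely the one that makes the other coefficients vanish rather than merely stay nonnegative, and it simultaneously pins down $\kappa_i$. Once this choice is identified, the rest is the elementary feasibility verification and the algebraic reduction of $(2-\beta_i/\kappa_i)\beta_i$ to $\nu_i$. The final ``in particular'' clause needs no further argument: it is the general statement specialized to the subset $I$ consisting of the active indices.
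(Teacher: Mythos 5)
Your proposal is correct and follows essentially the same route as the paper: fix $i$, choose $\kappa_j=\beta_j/2$ for $j\ne i$ and $\kappa_i=1-\tfrac12\sum_{j\ne i}\beta_j$ in the Proposition, simplify $(2-\beta_i/\kappa_i)\beta_i$ to $\nu_i$, then pass to $\min_{k\in I}\nu_k$ and maximize over $i\in I$. Your additional verification of the feasibility of the weights and of the degenerate cases is a welcome elaboration of details the paper leaves implicit.
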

\begin{proof}
 Fix $i\in I$
  and set
$$
\kappa_j = \begin{cases}
{1\over 2}\beta_j  & j\neq i, \\
1-{1\over 2}\sum_{j\neq i} \beta_j & j=i.
\end{cases}
$$
On substituting theses values into the inequality of Proposition 1 we obtain
$$ \|Q_{\beta}(x)-c\|^2 \le \|x-c\|^2 -
 \nu_i\,d^2(x,C_i)\le \|x-c\|^2 -\min_{k\in I}\nu_k\,d^2(x,C_i). $$
 Now maximizing with respect to $i\in I$ gives the conclusion.
\end{proof}
\section{Main result}
\begin{thm}Fix  a family of convex
sets $C_1,\ldots, C_N$ with nonempty intersection $C.$ Given a sequence $ \beta^{(n)}\in
\mathcal B .$      Let  $I^{(n)}$ denote the set of active
indices for $\beta^{(n)}.$ Assume that every index $i\in
\{1,2,\ldots, N\}$ occurs in $I^{(n)}$ for infinitely many $n.$ Let $n_k$ be positive
integers such that $n_{k-1}<n_{k}$ and
$$\{1,2,\ldots,N\}\subset I^{(n_{k-1})}\cup I^{(n_{k-1}+1)}\cup \ldots \cup I^{(n_k-1)},$$
i.e. every index occurs at least once for $n$ such that $n_{k-1}\le n<n_k.$ For
$$\nu_i^{(n)}={2\beta_i^{(n)} \left (2-\sum_{k=1}^N\beta_k^{(n)}\right )\over
 \beta_i^{(n)} +2-\sum_{k=1}^N \beta_k^{(n)}}.$$
let
$$\nu^{(k)}=\min\{\nu_i^{(n)}\,|\,n_{k-1}<n \le n_k, i\in I^{(n)}\}.$$
Assume that
$$\sum_{k=1}^\infty \nu^{(k)} =+\infty.$$
Then for any $x^{(0)}\in \mathbb{R}^d$ the sequence $x^{(n)}$ defined as
$$ x^{(n)}=
Q_{\beta^{(n)}}(x^{(n-1)}),\qquad n\ge 1
 $$
is convergent  to a point in $C.$
\end{thm}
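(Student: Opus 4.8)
The plan is to run the standard Fej\'er-monotonicity argument, driven by the sharp one-step estimate of Theorem 1 applied block by block. Fix $c\in C$; since $c\in C_j$ for every $j$ we have $P_{C_j}(c)=c$, hence $Q_{\beta^{(n)}}(c)=c$, and Theorem 1 applied on the $n$-th step with $I=I^{(n)}$ gives $\|x^{(n)}-c\|\le\|x^{(n-1)}-c\|$; thus $(x^{(n)})$ is bounded and $a_n:=\|x^{(n)}-c\|^{2}$ decreases to a limit. Moreover, for $n_{k-1}<n\le n_k$ one has $\min_{i\in I^{(n)}}\nu_i^{(n)}\ge\nu^{(k)}$, so Theorem 1 gives $a_{n-1}-a_n\ge\nu^{(k)}\max_{i\in I^{(n)}}d^2(x^{(n-1)},C_i)$; telescoping inside each block and then over all blocks,
$$\sum_{k\ge1}\nu^{(k)}S_k\le a_{n_0}-\lim_n a_n<\infty,\qquad S_k:=\sum_{n_{k-1}<n\le n_k}\max_{i\in I^{(n)}}d^2(x^{(n-1)},C_i).$$
Since $\sum_k\nu^{(k)}=\infty$, this forces $\liminf_k S_k=0$; fix blocks $k_\ell$ with $\nu^{(k_\ell)}>0$ and $S_{k_\ell}\to0$.

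The heart is to exhibit a cluster point of $(x^{(n)})$ lying in $C$. Two elementary one-step facts: if $\max_{i\in I^{(n)}}d^2(x^{(n-1)},C_i)=0$ then $x^{(n)}=x^{(n-1)}$; and, by the convexity argument used to prove Proposition 1, $\|x^{(n)}-x^{(n-1)}\|^2=\|\sum_j\beta_j^{(n)}(x^{(n-1)}-P_{C_j}x^{(n-1)})\|^2\le 2\sum_j\beta_j^{(n)}d^2(x^{(n-1)},C_j)\le 4\max_{i\in I^{(n)}}d^2(x^{(n-1)},C_i)$. Hence inside block $k_\ell$ each step displaces the iterate by at most $2\sqrt{S_{k_\ell}}$, while for every index $j$ there is a time $m$ in that block with $j\in I^{(m)}$, so $d(x^{(m-1)},C_j)\le\sqrt{S_{k_\ell}}$. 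After passing to a further subsequence so that $x^{(n_{k_\ell})}\to\bar x$, for each $j$ one estimates
$$d(\bar x,C_j)\le\|\bar x-x^{(n_{k_\ell})}\|+\|x^{(n_{k_\ell})}-x^{(m-1)}\|+d(x^{(m-1)},C_j),$$
where the middle term is a sum of per-step displacements over a part of block $k_\ell$; provided it tends to $0$, letting $\ell\to\infty$ gives $d(\bar x,C_j)=0$ for every $j$, i.e. $\bar x\in C$. Finally, since $\bar x\in C$ the sequence $\|x^{(n)}-\bar x\|$ is nonincreasing and has a subsequence converging to $0$, hence $x^{(n)}\to\bar x\in C$.

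I expect the whole difficulty to sit in the within-block diameter bound $\|x^{(n_{k_\ell})}-x^{(m-1)}\|\to0$: the crude estimate $\sum 2\sqrt{\max_i d^2(\cdot,C_i)}\le 2\sqrt{(\text{block length})\cdot S_{k_\ell}}$ is useless for long blocks. The remedy is to additionally exploit that each $Q_{\beta^{(n)}}$ is an averaged operator: from $Q_\beta=I-\sum_j\beta_j(I-P_{C_j})$ and the estimate $\|Q_\beta(x)-c\|^2\le\|x-c\|^2-(2-\sum_k\beta_k)\sum_j\beta_j d^2(x,C_j)$ recorded in the remark after Proposition 1, one gets $\|x^{(n)}-x^{(n-1)}\|^2\le\frac{2}{2-\sum_j\beta_j^{(n)}}(a_{n-1}-a_n)$, and $2-\sum_j\beta_j^{(n)}\ge\tfrac12\nu^{(k)}$ since $\nu_i^{(n)}\le 2\bigl(2-\sum_j\beta_j^{(n)}\bigr)$ for every active $i$. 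Together with the observation that steps carrying no residual leave the iterate put — so only genuinely active steps contribute, and the sum of their squared displacements over a block is at most a multiple of $S_{k_\ell}/\nu^{(k_\ell)}$ — this should pin the block diameter to $o(1)$ along the chosen blocks $k_\ell$ and close the proof.
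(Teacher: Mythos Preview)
Your Fej\'er monotonicity set-up and the telescoping bound $\sum_k\nu^{(k)}S_k<\infty$ are correct, and the reduction to finding one cluster point in $C$ is the right endgame. The gap is precisely where you locate it, and your proposed remedy does not close it. You correctly establish $\|x^{(n)}-x^{(n-1)}\|^2\le 4\max_{i\in I^{(n)}}d^2(x^{(n-1)},C_i)$ and hence $\sum_{n_{k-1}<n\le n_k}\|x^{(n)}-x^{(n-1)}\|^2\le 4S_k$, and the averaged-operator estimate gives the alternative bound $\sum\|x^{(n)}-x^{(n-1)}\|^2\le\frac{4}{\nu^{(k)}}(a_{n_{k-1}}-a_{n_k})$. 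But neither bound controls $\sum\|x^{(n)}-x^{(n-1)}\|$ (the block diameter) without a bound on the block length $n_k-n_{k-1}$, which the theorem does not provide: Cauchy--Schwarz reintroduces a factor $\sqrt{n_k-n_{k-1}}$. Your remark that ``steps carrying no residual leave the iterate put'' does not help, because there is nothing to prevent a block from containing arbitrarily many steps with small but nonzero residual. So the chain $\|x^{(n_{k_\ell})}-x^{(m-1)}\|\to0$ is unjustified, and without it you cannot conclude that the limits of $x^{(m_\ell^j-1)}$ for different $j$ coincide.

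The paper takes a genuinely different route that sidesteps the block-diameter problem entirely. It proves, via a compactness argument in $\mathbb{R}^d$, the existence of a nondecreasing positive function $\eta_R$ such that for \emph{any} sequence $\beta^{(n)}$ whose active indices eventually cover $\{1,\ldots,N\}$ and any starting point $u^{(0)}$ with $\|u^{(0)}\|\le R$, one has $\max_{n,i}d^2(u^{(n-1)},C_i)\ge\eta_R(d(u^{(0)},C))$. Applied with $u^{(0)}=x^{(n_{k-1})}$ on each block, this yields $a_{n_{k-1}}-a_{n_k}\ge\nu^{(k)}\eta_R(d(x^{(n_{k-1})},C))$, whence $\sum_k\nu^{(k)}\eta_R(d(x^{(n_{k-1})},C))<\infty$; combined with $\sum_k\nu^{(k)}=\infty$ and the monotonicity of $d(x^{(n)},C)$ this forces $d(x^{(n)},C)\to0$, after which the Fej\'er argument finishes as you wrote. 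The compactness lemma is non-constructive (a contradiction argument exploiting that closed balls in $\mathbb{R}^d$ are compact) and is the essential finite-dimensional ingredient; your attempted direct estimates do not substitute for it.
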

\noindent{\bf Remark}. This result generalizes \cite[Thm 3.20(ii)]{BB} (see also
\cite[Cor. 3.25]{BB}) in two essential aspects.
First of all it allows repetitve control while \cite[Thm 3.20]{BB} could afford only intermittent
control, i.e. when the sequence $n_k$ is of the form $n_k=kp.$ Secondly the coefficients
$\nu_i^{(n)}$ are smaller than $\mu_i^{(n)},$ introduces in \cite{BB}   as
$$ \mu_i^{(n)}= 2\beta_i^{(n)}\left (2-\sum_{k=1}^N\beta_k^{(n)} \right ).$$

 For example let $N=2$ and
$$\beta_1^{(n)}={1\over { n}},\ \beta_2^{(n)}=2-{2\over  { n}} .$$ The algorithm is
then 1-intermittent, hence
 we can take $n_k=k$ and $I_k=\{1,2\}.$ Thus
$$\sum_{k=1}^\infty \min\{\mu_1^{(k)},\mu_2^{(k)}\}<+\infty,\quad
\sum_{k=1}^\infty \min\{\nu_1^{(k)},\nu_2^{(k)}\}=+\infty.$$ Therefore Theorem 3.20 of
\cite{BB}  does not apply while our Theorem 2 does.
\begin{proof} With no loss of generality we may assume that $0\in C.$
Fix $u^{(0)}\in\mathbb{R}^d$  and let $u^{(n)}=Q_{\beta^{(n)}}(u^{(n-1)})$ for $n\ge 1.$ By Theorem
1 we get
\begin{equation}\label{5}
\|u^{(n)}\|^2\le \|u^{(n-1)}\|^2-\min_{i\in I^{(n)}}\nu_i^{(n)}\,\max_{i\in
I^{(n)}}d^2(u^{(n-1)},C_i)
\end{equation}
Iterating (\ref{5}) leads to
\begin{multline}\label{7}
\|u^{(n)}\|^2\le \|u^{(0)}\|^2-\sum_{m=1}^n\min_{i\in I^{(m)}}\nu_i^{(m)}\,\max_{i\in I^{(m)}}d^2(u^{(m-1)},C_i)\\
\le \|u^{(0)}\|^2- \min_{\stackrel{0< m\le n}{i\in I^{(m)}}}\nu_i^{(m)}\,
\max_{\stackrel{1\le m\le n}{i\in I^{(m)}}} d^2(u^{(m-1)},C_i)
\end{multline}
\begin{lem} Given   a family of convex
sets $C_1,\ldots, C_N$ with nonempty intersection $C$ and      a sequence  $
\beta^{(n)}\in \mathcal B.$   Let $I^{(n)}$ denote the set of
active indices for $\beta^{(n)}.$ Assume  that every index $i\in
\{1,2,\ldots, N\}$ occurs in $I^{(n)}$ for at least one $n.$ Then for any positive number
$R$ there exists a nondecreasing and positive function $\eta_R: (0,+\infty)\to
(0,+\infty)$
 such that
$$ \max_{\stackrel{n\ge 1}{i\in I^{(n)}}}{d^2(u^{(n-1)},C_i)}\ge \eta_R(d(u^{(0)},C))$$
for any $u^{(0)}$ with $\|u^{(0)}\|\le R$ and $u^{(0)}\notin C.$ The function $\eta_R$ is
independent of the choice of the sequence $ \beta^{(n)}.$
\end{lem}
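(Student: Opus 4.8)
The plan is to prove the lemma by contradiction, using a compactness argument on the starting point together with a diagonal/subsequence extraction on the sequences $\beta^{(n)}$. First I would fix $R>0$ and define, for each $t>0$,
$$\eta_R(t)=\inf\Big\{\max_{\stackrel{n\ge 1}{i\in I^{(n)}}}d^2(u^{(n-1)},C_i)\ :\ \|u^{(0)}\|\le R,\ d(u^{(0)},C)\ge t,\ (\beta^{(n)})_n\text{ admissible}\Big\},$$
where ``admissible'' means $\beta^{(n)}\in\mathcal B$ with every index active for at least one $n$, and $u^{(n)}=Q_{\beta^{(n)}}(u^{(n-1)})$. This infimum is monotone in $t$ by construction, so the only real content is to show $\eta_R(t)>0$ for every $t>0$; positivity on $(0,\infty)$ gives a nondecreasing positive function, and it is manifestly independent of the particular sequence chosen.

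So suppose for contradiction that $\eta_R(t)=0$ for some $t>0$. Then there are starting points $u^{(0)}_m$ with $\|u^{(0)}_m\|\le R$ and $d(u^{(0)}_m,C)\ge t$, and admissible sequences $(\beta^{(n)}_m)_n$, such that the associated quantity $M_m:=\max_{n\ge1,\,i\in I^{(n)}_m}d^2(u^{(n-1)}_m,C_i)\to 0$. By Theorem 1 (applied with the full active set $I$ at each step, so the ``$\max$'' term is bounded below by $d^2$ at any single active index), the orbits $u^{(n)}_m$ stay in the ball of radius $R$ (the norms are nonincreasing once $0\in C$, which we may assume), hence all the points $u^{(n-1)}_m$ that we look at lie in a fixed compact set. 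The key extraction: pass to a subsequence in $m$ so that $u^{(0)}_m\to u_\star$ with $\|u_\star\|\le R$ and $d(u_\star,C)\ge t$, so in particular $u_\star\notin C$, say $u_\star\notin C_{i_0}$. I want to locate, for each large $m$, a step $n$ at which index $i_0$ is active and at which $u^{(n-1)}_m$ is still close to $u^{(0)}_m$; then $d(u^{(n-1)}_m,C_{i_0})$ is bounded below, contradicting $M_m\to0$.

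To make that last point work one controls how far the orbit can drift before every index has been seen. Here is the mechanism: if $M_m$ is small, then at \emph{every} step $n$ and every active $i\in I^{(n)}_m$ we have $d(u^{(n-1)}_m,C_i)\le\sqrt{M_m}$, so $\|u^{(n-1)}_m-P_{C_i}(u^{(n-1)}_m)\|\le\sqrt{M_m}$; since $Q_{\beta^{(n)}_m}$ moves $u^{(n-1)}_m$ by $\sum_j\beta^{(n)}_j\|u^{(n-1)}_m-P_{C_j}(u^{(n-1)}_m)\|\le 2\sqrt{M_m}$, each single step displaces the orbit by at most $2\sqrt{M_m}$. Now let $n^\star_m$ be the first step at which index $i_0$ becomes active; by admissibility this is finite. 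The obstacle is that $n^\star_m$ is not bounded uniformly in $m$, so a bound like ``displacement $\le 2 n^\star_m\sqrt{M_m}$'' is useless. The fix is to bound the \emph{total} displacement rather than the step count: from Theorem 1 (in the sharpened form, or even the crude $\|u^{(n)}_m\|^2\le\|u^{(n-1)}_m\|^2 - c\sum_j\beta^{(n)}_j d^2(u^{(n-1)}_m,C_j)$ with $c$ bounded below — this requires the coefficients to be nondegenerate, which is exactly the role of the active-set restriction, since $\nu_i^{(n)}\ge \tfrac12\min(\beta_i^{(n)}, 2-\sum\beta_k^{(n)})$ when $i$ is active), summing over $n$ gives $\sum_{n}\sum_j\beta^{(n)}_j d^2(u^{(n-1)}_m,C_j)\le R^2/c$, hence $\sum_n\|u^{(n)}_m-u^{(n-1)}_m\|$ is controlled by $\big(\sum_n\sum_j\beta_j^{(n)} d^2\big)^{1/2}$ times $\big(\sum_n\sum_j\beta_j^{(n)}\big)^{1/2}$ — but the latter sum is infinite, so this still needs care. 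The honest route, which I expect to be the crux, is instead: show directly that $d(u^{(n-1)}_m,C_{i_0})\ge d(u^{(0)}_m,C_{i_0}) - (\text{displacement up to step }n-1)$ and that the displacement up to step $n^\star_m-1$ is at most $\big(\sum_{n<n^\star_m}\sum_j\beta^{(n)}_j d^2(u^{(n-1)}_m,C_j)\big)^{1/2}\cdot\big(\sum_{n<n^\star_m}\sum_j\beta_j^{(n)}\big)^{1/2}$; here before step $n^\star_m$ the index $i_0$ is never active so $\beta^{(n)}_{i_0}=0$, and one bounds the relevant partial sums using that each active $d^2\le M_m\to0$ while $\sum\beta\le2$ at each step — but to close it one needs a uniform bound on the number of steps before $i_0$ first appears.

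Since no such uniform bound is available in general, the correct argument must avoid it, and the way to do so is the contradiction-via-compactness packaged slightly differently: iterate $Q$ starting from the limit point $u_\star$ against a limiting sequence of coefficients. Concretely, after passing to a subsequence I would assume that the ``first time index $i$ appears'' is either bounded (then extract a convergent subsequence of the initial finite block of coefficient vectors and pass to the limit, getting an orbit from $u_\star$ along which some $d(u_\star,C_{i_0})>0$ appears as an active distance — contradiction with the limit of $M_m=0$) or unbounded for some $i$; in the latter case one instead uses Theorem 1 to see $\|u^{(n)}_m\|$ decreases by at least $\nu\, d^2(u^{(n-1)}_m,C_i)$ at the step where $i$ is active, and since the starting norms are bounded by $R$ while the active distances stay near the limiting value $d(u_\star, C_{i_0})>0$ (by continuity and the smallness of the drift over any \emph{fixed} finite horizon — which is all one needs, because $d(u_\star, C_{i_0})$ being positive is a \emph{closed} condition that survives to the limit), the norm would be driven negative, a contradiction. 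The main obstacle, then, is organizing this case analysis cleanly so that in each branch one legitimately extracts a limiting orbit and identifies a persistent positive active distance; once that bookkeeping is done, $\eta_R(t)>0$ follows, and taking $\eta_R$ to be (a positive nondecreasing minorant of) the infimum above completes the proof.
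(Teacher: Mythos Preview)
Your setup is correct and you correctly isolate the central difficulty: there is no uniform bound on the number of steps before a given index first appears, so bounding the total drift of the orbit by step-counting or by Cauchy--Schwarz against $\sum_n\sum_j\beta_j^{(n)}$ cannot work. But none of your proposed resolutions actually closes this gap. In the ``bounded first-appearance time'' branch, passing to a limiting block of coefficients produces a limiting orbit $(u^{(n)}_\infty)$ starting at $u_\star$, yet nothing forces that orbit to stay at $u_\star$: the limiting operators may well move it, and all you learn at step $n^\star$ is $u^{(n^\star-1)}_\infty\in C_{i_0}$, which does not contradict $u_\star\notin C_{i_0}$. In the unbounded branch your appeal to ``smallness of the drift over any fixed finite horizon'' is precisely what is unavailable (the horizon is $n^\star_m\to\infty$), and the ``drive the norm negative'' argument fails because the single decrement $\nu_{i_0}^{(n^\star_m)}\,d^2(\cdot,C_{i_0})$ at that step has $\nu_{i_0}^{(n^\star_m)}$ not bounded below.

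The missing idea, which is what the paper uses, makes the bounded/unbounded dichotomy irrelevant. Do not fix a single bad index $i_0$ at the outset. Instead, let $\mathcal A_1$ be the set of indices active at step $n=1$ for infinitely many $m$; after passing to a subsequence, only indices in $\mathcal A_1$ ever appear at step $1$, and from $d^2(u^{(0)}_m,C_i)\le M_m\to0$ and $u^{(0)}_m\to y$ you get $y\in C_i$ for every $i\in\mathcal A_1$. Now observe that $Q_\beta(y)=y$ whenever the active set of $\beta$ lies in $\mathcal A_1$. Hence, by \emph{nonexpansiveness alone}, $\|u^{(n)}_m-y\|\le\|u^{(0)}_m-y\|\to0$ for every $n$ up to (and including) the first step $l_m$ at which an index outside $\mathcal A_1$ becomes active, \emph{regardless of how large $l_m$ is}. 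Thus $u^{(l_m-1)}_m\to y$, and repeating the extraction at step $l_m$ yields a new nonempty set $\mathcal A_2$ disjoint from $\mathcal A_1$ with $y\in C_i$ for $i\in\mathcal A_2$. After at most $N$ rounds you obtain $y\in C$, contradicting $d(y,C)\ge t$. The drift is controlled not by summing displacements but by recognizing $y$ as a common fixed point of all the intermediate operators; this is the mechanism your plan lacks.
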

\begin{proof} Fix $r>0$ and  consider the set
$$B_{r,R}=\{u^{(0)}\in \mathbb{R}^d\,:\, \|u^{(0)}\|\le R,\ d(u^{(0)},C)\ge r\}.$$
The proof will be completed if we show that for any $u^{(0)}\in B_{r,R}$ there exists a
positive number $\eta_R(r)$ such that
$$ \max_{\stackrel{n\ge 1}{i\in I^{(n)}}}{d^2(u^{(n-1)},C_i)}\ge \eta_R(r).$$
 Suppose, by
contradiction, that for any $m\in \mathbb N$ there exist  vectors $u^{(0)}_{(m)}\in B_R,$
 and a sequence
   $\beta^{(n)}_{(m)}\in \mathcal B,$
 satisfying the assumptions of Lemma 1, such that
\begin{equation}\label{crucial}
 \max_{\stackrel{n\ge 1}{i\in I^{(n)}_{(m)}}}{d^2(u_{(m)}^{(n-1)},C_i)}\le {1\over m}
\end{equation}
where $$u_{(m)}^{(n)}= Q_{
\beta_{(m)}^{(n)}}(u_{(m)}^{(n-1)}),\qquad n\ge 1.
$$
By compactness of $B_{r,R}$ we may assume that $x^{(m)}_0\stackrel{m}{\to} y$ and $y\in
B_{r,R}.$ Consider the sets $I_{(m)}^{(1)} .$ Some indices  of $\{1,2,\ldots, N\}$ occur
for infinitely many $m.$ Let $\mathcal A_1$ denote those indices. Clearly we may assume,
eventually by restricting to large values of $m,$ that only indices of $\mathcal A_1$ may
occur in $I_{(m)}^{(1)} ,$ and each index does so infinitely many $m.$ Therefore, fixing
$n=1$ and taking the limit in (\ref{crucial}) when $m\to\infty$  yield $y\in C_i$ for any
$i\in \mathcal A_1.$  If $\mathcal A_1=\{1,2,\ldots, N\},$ then $y\in C,$ which is a
contradiction with $y\in B_{r,R}.$ Otherwise we have $\mathcal A_1\subsetneq\{1,2,\ldots,
N\}.$ Since for any fixed $m$ every index of $ \{1,2,\ldots, N\}$ occurs in
$I_{(m)}^{(n)}$ at least for one $n,$ there exists the least number $l_m$ such that
$I_{(m)}^{(l_m)}\setminus \mathcal A_1\neq \emptyset.$ Observe that
$Q_{\beta}(y)=y$ for any   $\beta\in \mathcal B$ such that
the set of active indices $I$ of $\beta$ is contained in $\mathcal A_1.$
Therefore
\begin{equation}
y=Q_{\beta_{(m)}^{(l_m-1)}}Q_{\beta_{(m)}^{(l_m-2)}}\ldots Q_{\beta_{(m)}^{(2)}}Q_{\beta_{(m)}^{(1)}}(y).
\end{equation}
Define
\begin{equation}
\widetilde{u}_{(m)}^{(1)}:=u_{(m)}^{(l_m-1)}=Q_{\beta_{(m)}^{(l_m-1)}}Q_{\beta_{(m)}^{(l_m-2)}}\ldots Q_{\beta_{(m)}^{(2)}}Q_{\beta_{(m)}^{(1)}}(u_{(m)}^{(0)}).
\end{equation}
 Since the operators $Q_{\beta_{(m)}^{(k)}}$ are nonexpansive we obtain
\begin{equation}
 d(\widetilde{u}_{(m)}^{(1)},y)
\le d(u^{(0)}_{(m)},y)
\end{equation}
Hence $\widetilde{u}_{(m)}^{(1)}\stackrel{m}{\to}y.$

Consider now the sets $I_{(m)}^{(l_m)}.$ Each of these sets contains elements which do
not belong to $\mathcal A_1.$ Let $\mathcal A_2$ denote those indices outside $\mathcal
A_1$ which occur for infinitely many values of $m.$. By restricting to large values of
$m$ we may assume that only indices of $\mathcal A_1$ and $\mathcal A_2$ may occur in
$I_{(m)}^{(l_m)}$ and all indices of $\mathcal A_2$ occur for infinitely many values of
$m.$ Set $n=l(m)+1$ in (\ref{crucial}). Then since
$\widetilde{u}_{(m)}^{(1)}=u_{(m)}^{(l_m-1)}$ we get
\begin{equation}
  \max_{i\in I_{(m)}^{(l_m)}}d^2(\widetilde{u}_{(m)}^{(1)}, C_i)\le {1\over m}.
\end{equation}
Therefore for any $i\in \mathcal A_2$ we have
$$
   d^2(\widetilde{u}_{(m)}^{(1)}, C_i)\le {1\over m}
$$ for infinitely many $m.$
 As $\widetilde{u}_{(m)}^{(1)}$ tends to $y,$ when $m\to\infty,$ we obtain that $y\in C_i$ for any $i\in
\mathcal A_2.$ Therefore $y\in C_i$ for $i\in \mathcal A_1\cup \mathcal A_2.$

By repeating this argument at most $N$ times we get that $y\in C_i$ for any
$i=1,2,\ldots, N.$ Hence $y\in C$ which contradicts the fact that $y\in B_{r,R}.$
 \end{proof}
 Let's return to the proof of Theorem 2. With no loss of generality we may assume
 that $0\in C.$ Fix $R>0$ and assume that
 $\|x^{(0)}\|\le R.$ Then since every operator $Q_{\beta}$ is nonexpansive we obtain
 $\|x^{(n)}\|\le R$ for any $n.$ Assume that
 $$\{1,2,\ldots,N\}\subset \bigcup_{j=n_{k-1} }^{n_{k}-1 }I^{(j)}.$$ Then combining Lemma 1 with
 $u^{(0)}=x^{(n_{k-1})}$ and   formula
(\ref{7}) yields
 \begin{multline*}
\|x^{(n_k)}\|^2\le \|x^{(n_{k-1})}\|^2- \min_{\stackrel{n_{k-1}< n\le n_k}{i\in
I^{(n)}}}\nu_i^{(n)}\,\max_{\stackrel{n_{k-1}< n\le n_k}{i\in I^{(n)}}}
d^2(x^{(n-1)},C_i)\\
\le \|x^{(n_{k-1})}\|^2- \nu^{(k)}\,\,\eta_R(d(x^{(n_{k-1})},C))
 \end{multline*}
 This implies that the series
 $$ \sum_{k=1}^\infty \nu^{(k)} \,\,\eta_R(d(x^{(n_{k-1})},C))\,$$
 is convergent.
 Since the operators $Q_{\beta}$ are nonexpansive,
  the sequence $d(x^{(n)},C)$ is nonincreasing. Therefore, by assumptions made on
 the coefficients $\nu_i^{(n)},$ we obtain that $\eta_R(d(x^{(n)},C))\stackrel{n}{\to}~0.$
Hence $d(x^{(n)},C)\stackrel{n}{\to}~0.$ Since $x^{(n)}$ is bounded, it contains a
convergent subsequence $x^{(n_m)}.$ Denote its limit by $c.$ Then $c\in C.$  By
Proposition 1 the sequence $\|x^{(n)}-c\|$ is nonincreasing. Therefore, it tends to zero,
i.e. $x^{(n)}\stackrel{n}{\to}c.$
\end{proof}
\section{Perturbation}
\begin{pro}Given   a family of convex
sets $C_1,\ldots, C_N$ with nonempty intersection $C$ and      a sequence  $
\widetilde{\beta}^{(n)}\in \mathcal B.$   Assume that for any $m\in \mathbb{N}$ and $x\in \mathbb{R}^d $
the sequence
$$Q_{\widetilde{\beta}^{(n)}}Q_{\widetilde{\beta}^{(n-1)}}\ldots Q_{\widetilde{\beta}^{(m)}}(x) $$
is convergent to an element of $C $ as $n\to \infty.$ Let a sequence $ \beta^{(n)}\in \mathcal B$ satisfy
$$ \sum_{n=1}^\infty \sum_{i=1}^N |\beta_i^{(n)}-\widetilde{\beta}_i^{(n)}|<\infty.$$ Then for any
$x\in \mathbb{R}^d$ the sequence
$$ Q_{\beta^{(n)}}Q_{\beta
^{(n-1)}}\ldots Q_{\beta^{(1)}}(x)$$
is convergent to an element of $C $ as $n\to \infty.$
\end{pro}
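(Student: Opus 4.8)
The plan is to run the perturbed composition alongside the unperturbed one and to control the discrepancy between them by the summable quantities $\varepsilon_n:=\sum_{i=1}^N|\beta_i^{(n)}-\widetilde\beta_i^{(n)}|$ on a fixed ball, exploiting that every $Q_\beta$ is nonexpansive and fixes each point of $C$. As in the earlier proofs we may assume $0\in C$; then $Q_\beta(0)=\sum_{j}\beta_j P_{C_j}(0)=0$, so for a fixed starting point $x$, writing $M=\|x\|$ and $U^{(n)}_m:=Q_{\beta^{(n)}}Q_{\beta^{(n-1)}}\cdots Q_{\beta^{(m)}}$ (with the empty product equal to $I$), every iterate $U^{(n)}_1(x)$ stays in the closed ball of radius $M$ about the origin. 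Put $\tau_m=\sum_{k\ge m}\varepsilon_k$, so $\tau_m\to 0$ by hypothesis.

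First I would record the pointwise defect estimate: for $\beta,\widetilde\beta\in\mathcal B$ and $w\in\mathbb{R}^d$,
$$\|Q_\beta(w)-Q_{\widetilde\beta}(w)\|=\left\|\sum_{j=1}^N(\beta_j-\widetilde\beta_j)\bigl(w-P_{C_j}(w)\bigr)\right\|\le\sum_{j=1}^N|\beta_j-\widetilde\beta_j|\,d(w,C_j)\le\Bigl(\sum_{j=1}^N|\beta_j-\widetilde\beta_j|\Bigr)\|w\|,$$
where we used $C\subset C_j$ and $0\in C$. Next, with $\widetilde U^{(n)}_m$ defined from $\widetilde\beta$ exactly as $U^{(n)}_m$ from $\beta$, the telescoping identity
$$U^{(n)}_m-\widetilde U^{(n)}_m=\sum_{k=m}^{n}\widetilde U^{(n)}_{k+1}\bigl(Q_{\beta^{(k)}}-Q_{\widetilde\beta^{(k)}}\bigr)U^{(k-1)}_m$$
together with the nonexpansiveness of each $\widetilde U^{(n)}_{k+1}$, the estimate above, and the fact that $\|U^{(k-1)}_m(z)\|\le M$ whenever $\|z\|\le M$, yields
$$\bigl\|U^{(n)}_m(z)-\widetilde U^{(n)}_m(z)\bigr\|\le M\sum_{k=m}^n\varepsilon_k\le M\tau_m\qquad\text{for }\|z\|\le M,\ n\ge m.$$

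Then I would combine this with the hypothesis on the $\widetilde\beta$-process. Fix $x$ with $\|x\|=M$ and, for each $m\ge 1$, set $z_m:=U^{(m-1)}_1(x)$; then $\|z_m\|\le M$ and $U^{(n)}_1(x)=U^{(n)}_m(z_m)$ for all $n\ge m$. By assumption the sequence $\widetilde U^{(n)}_m(z_m)$ converges as $n\to\infty$ to some $c_m\in C$ (here $m$ and $z_m$ are fixed), so the last displayed inequality gives $\limsup_{n\to\infty}\|U^{(n)}_1(x)-c_m\|\le M\tau_m$. Since $\tau_m\to 0$, this forces the sequence $U^{(n)}_1(x)$ to be Cauchy: given $\delta>0$, choose $m$ with $4M\tau_m<\delta$ and then $n,n'$ large enough that both $\|U^{(n)}_1(x)-c_m\|$ and $\|U^{(n')}_1(x)-c_m\|$ are below $2M\tau_m$. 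Let $y=\lim_n U^{(n)}_1(x)$. Finally $\|y-c_m\|=\lim_n\|U^{(n)}_1(x)-c_m\|\le M\tau_m\to 0$, so $c_m\to y$; since $C$ is closed and each $c_m\in C$, we get $y\in C$, which is the assertion.

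The main thing requiring care is the bookkeeping: verifying the telescoping identity with the correct index ranges, and, above all, checking that every vector to which the defect estimate is applied along the run — namely $U^{(k-1)}_m(z_m)=U^{(k-1)}_1(x)$ — indeed lies in the fixed ball, so that the control of $\|Q_\beta-Q_{\widetilde\beta}\|$ is uniform over the whole orbit. Everything else is routine; note that the summability hypothesis is used only through $\tau_m\to 0$, and the hypothesis on the $\widetilde\beta$-process is invoked solely for tail compositions beginning at an arbitrary but fixed index $m$ and an arbitrary but fixed point.
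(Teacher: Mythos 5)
Your proof is correct and follows essentially the same route as the paper: the same telescoping comparison of $Q_{\beta^{(n)}}\cdots Q_{\beta^{(m)}}$ with $Q_{\widetilde\beta^{(n)}}\cdots Q_{\widetilde\beta^{(m)}}$, controlled term by term through nonexpansiveness and the estimate $\|Q_\beta(w)-Q_{\widetilde\beta}(w)\|\le\sum_j|\beta_j-\widetilde\beta_j|\,d(w,C_j)$ over the bounded orbit. The only (harmless) divergence is in the endgame: the paper bounds the defect by $d(\cdot,C)$ and concludes $d(x^{(n)},C)\to 0$ before invoking the Fej\'er-monotonicity argument from the end of Theorem 2, whereas you bound it by the radius $M$ and prove the Cauchy property directly from the limits $c_m$ of the tail $\widetilde\beta$-orbits, which incidentally bypasses the compactness step.
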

\begin{proof}
By (\ref{one}) we have
\begin{multline}
d(Q_{\beta^{(k)}}(y),Q_{ {\widetilde{\beta}}^{(k)}}(y))
  \le \sum_{j=1}^N|\beta_j^{(k)}
-\widetilde{\beta}_j^{(k)}|\,\|P_{C_j}y-y\|\\
\le \sum_{j=1}^N|\beta_j^{(k)}
-\widetilde{\beta}_j^{(k)}|\, d(y,C).
\end{multline}
Denote for simplicity
$$Q_n:= Q_{\beta^{(n)}},\quad
\widetilde{Q}_n:=Q_{\widetilde{\beta}^{(n)}} $$
and $$ {x}^{(m)}=Q_mQ_{m-1}\ldots Q_{1}(x).$$
Then
\begin{multline*}
d( {x}^{(n)} ,\,C)
=d(Q_{n}Q_{n-1}\ldots Q_{m+1}({x}^{(m)}),\,C)\\
   \le \sum_{k=m+1}^n
 d(\widetilde{Q}_{n}\ldots  \widetilde{Q}_{k+1}\widetilde{Q}_{k} ( {x}^{(k-1)}),
\widetilde{Q}_{n}    \ldots \widetilde{Q}_{k+1}Q_{k}( {x}^{(k-1)}))\\
+ d(\widetilde{Q}_{n} \widetilde{Q}_{n-1}\ldots  \widetilde{Q}_{m+1}( {x}^{(m)}) ,\,C)\\
\le
\sum_{k=m+1}^n
 d( \widetilde{Q}_{k} ( {x}^{(k-1)}),
 Q_{k}( {x}^{(k-1)}))
\,+ \,d(\widetilde{Q}_{n} \widetilde{Q}_{n-1}\ldots  \widetilde{Q}_{m+1}( {x}^{(m)}) ,C)
\\
\le
\sum_{k=m+1}^n\sum_{j=1}^N|\beta_j^{(k)}
-\widetilde{\beta}_j^{(k)}|\, d(\widetilde{x}^{(k-1)}, C)\,+\,
d(Q_{n} {Q}_{n-1}\ldots  {Q}_{m+1}(\widetilde{x}^{(m)}) ,C)\\
\le \sum_{k=m+1}^n\sum_{j=1}^N|\beta_j^{(k)}
-\widetilde{\beta}_j^{(k)}|\, d(x, C)\,+\,
d(\widetilde{Q}_{n} \widetilde{Q}_{n-1}\ldots \widetilde{Q}_{m+1}( {x}^{(m)}) ,C)
\end{multline*}
Now the conclusion follows from the assumptions. Indeed, we may assume that $d(x,C)>0$ as
otherwise $x^{(n)}=x\in C$ for any $n.$ Let $m$ be large so that
$$ \sum_{k=m+1}^\infty \sum_{j=1}^N| {\beta}_j^{(k)}
-\widetilde{\beta}_j^{(k)}|< {\varepsilon\over 2d(x,C)}.$$ Next let $n$ be large so that
$$d(\widetilde{Q}_{n} \widetilde{Q}_{n-1}\ldots  \widetilde{Q}_{m+1}( {x}^{(m)}) ,C)<\varepsilon/2. $$
Thus $d(x^{(n)},C)<\varepsilon$ for $n$ large. Hence $d(x^{(n)},C)\to 0$ as $n\to \infty.$
This implies that $x^{(n)}$ tends to a point in $C$
(see the end of the proof of Theorem 2).
\end{proof}
\begin{cor}Fix  a family of convex
sets $C_1,\ldots, C_N$ with nonempty intersection $C.$ Given a sequence $ \beta^{(n)}\in
\mathcal B .$      Let  $I^{(n)}$ denote the set of active
indices for $\beta^{(n)}$ and let $J^{(n)}$ be a sequence of subsets of $I^{(n)}$
such that
$$ \sum_{n=1}^\infty \sum_{i\in I^{(n)}\setminus J^{(n)}}\beta^{(n)}_i<+\infty.$$
 Assume that every index $i\in
\{1,2,\ldots, N\}$ occurs in $J^{(n)}$ for infinitely many $n.$ Let $n_k$ be positive
integers such that $n_{k-1}<n_{k}$ and
$$\{1,2,\ldots,N\}\subset J^{(n_{k-1})}\cup J^{(n_{k-1}+1)}\cup \ldots \cup J^{(n_k-1)},$$
i.e. every index occurs at least once for $n$ such that $n_{k-1}\le n<n_k.$ For
$$\nu_i^{(n)}={2\beta_i^{(n)} \left (2-\sum_{k=1}^N\beta_k^{(n)}\right )\over
 \beta_i^{(n)} +2-\sum_{k=1}^N \beta_k^{(n)}} $$
let
$$\nu_J^{(k)}=\min\{\nu_i^{(n)}\,|\,n_{k-1} <n\le n_k, i\in J^{(n)}\}.$$
Assume
that
$$\sum_{k=1}^\infty\nu_J^{(k)} =+\infty.$$
Then for any $x^{(0)}\in \mathbb{R}^d$ the sequence $x^{(n)}$ defined as
$$ x^{(n)}=
Q_{\beta^{(n)}}(x^{(n-1)}),\qquad n\ge 1
 $$
is convergent  to a point in $C.$
\end{cor}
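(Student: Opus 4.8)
The plan is to deduce the Corollary from Theorem 2 and Proposition 2 by truncating the coefficients lying outside $J^{(n)}$. Define $\widetilde\beta^{(n)}$ by $\widetilde\beta_i^{(n)}=\beta_i^{(n)}$ for $i\in J^{(n)}$ and $\widetilde\beta_i^{(n)}=0$ otherwise. Since $0\le\widetilde\beta_i^{(n)}\le\beta_i^{(n)}$ and $\sum_i\beta_i^{(n)}\le 2$, we have $\widetilde\beta^{(n)}\in\mathcal B$; and since $J^{(n)}\subset I^{(n)}$ forces $\beta_i^{(n)}>0$ for $i\in J^{(n)}$, the set of active indices of $\widetilde\beta^{(n)}$ is exactly $J^{(n)}$.

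The first step is to check that Theorem 2 applies to $(\widetilde\beta^{(n)})$, and in fact to every tail of it. With $J^{(n)}$ now playing the role of the active index sets, the hypotheses ``every index occurs infinitely often'' and ``each block $n_{k-1}\le n<n_k$ covers $\{1,\dots,N\}$'' are precisely those assumed in the Corollary, and both pass to every tail after discarding finitely many initial blocks. For the divergence condition one must compare the quantities $\nu_i^{(n)}$ attached to $\widetilde\beta^{(n)}$ with those attached to $\beta^{(n)}$: writing $f(t)=2\beta t/(\beta+t)$ one has $f'(t)=2\beta^2/(\beta+t)^2>0$, so with $\beta_i^{(n)}$ fixed the value $\nu_i^{(n)}$ is nondecreasing in $2-\sum_k\beta_k^{(n)}$. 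Because $\sum_k\widetilde\beta_k^{(n)}=\sum_{k\in J^{(n)}}\beta_k^{(n)}\le\sum_k\beta_k^{(n)}$ while $\widetilde\beta_i^{(n)}=\beta_i^{(n)}$ for $i\in J^{(n)}$, the value of $\nu_i^{(n)}$ computed from $\widetilde\beta^{(n)}$ is $\ge$ the one computed from $\beta^{(n)}$, for every $i\in J^{(n)}$. Hence the constants $\widetilde\nu^{(k)}$ produced by Theorem 2 for $(\widetilde\beta^{(n)})$ satisfy $\widetilde\nu^{(k)}\ge\nu_J^{(k)}$, so $\sum_k\widetilde\nu^{(k)}=+\infty$, and the same holds for every tail since a divergent series of nonnegative terms has divergent tails. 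Applying Theorem 2 to the re-indexed sequence $\widetilde\beta^{(m)},\widetilde\beta^{(m+1)},\dots$ (taking as new block boundaries those $n_k$ that exceed $m$, and merging the first, possibly partial, block into the next) gives, for every $m\in\mathbb N$ and every $x\in\mathbb R^d$, that $Q_{\widetilde\beta^{(n)}}Q_{\widetilde\beta^{(n-1)}}\cdots Q_{\widetilde\beta^{(m)}}(x)$ converges to a point of $C$.

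This is exactly the standing hypothesis of Proposition 2 for the sequence $(\widetilde\beta^{(n)})$, so it only remains to verify the summability of the perturbation. Since $\beta_i^{(n)}-\widetilde\beta_i^{(n)}$ is $0$ for $i\in J^{(n)}$ and is $\beta_i^{(n)}$ otherwise, and $\beta_i^{(n)}=0$ for $i\notin I^{(n)}$, we get
$$\sum_{n=1}^\infty\sum_{i=1}^N|\beta_i^{(n)}-\widetilde\beta_i^{(n)}|=\sum_{n=1}^\infty\sum_{i\in I^{(n)}\setminus J^{(n)}}\beta_i^{(n)}<+\infty,$$
which is the hypothesis of the Corollary. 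Proposition 2 now yields that $Q_{\beta^{(n)}}Q_{\beta^{(n-1)}}\cdots Q_{\beta^{(1)}}(x)$ converges to a point of $C$ for every $x$; taking $x=x^{(0)}$ gives the claimed convergence of $x^{(n)}$. The only points needing real care are the monotonicity comparison $\widetilde\nu^{(k)}\ge\nu_J^{(k)}$ and the routine but slightly fussy re-indexing of block boundaries required to feed tails of $(\widetilde\beta^{(n)})$ into Theorem 2; everything else is immediate.
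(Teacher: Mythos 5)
Your proposal is correct and follows essentially the same route as the paper: truncate to $\widetilde\beta^{(n)}$ supported on $J^{(n)}$, use the monotonicity of $\nu_i^{(n)}$ in $2-\sum_k\beta_k^{(n)}$ to get $\widetilde\nu^{(k)}\ge\nu_J^{(k)}$ and hence the hypotheses of Theorem 2, and then invoke the perturbation proposition with the summability assumption. Your explicit attention to the tails (the hypothesis of the perturbation result for every starting index $m$) is a point the paper glosses over, but it is the same argument.
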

\begin{proof}
Define
$$ \widetilde{\beta}^{(n)}_{i}=\begin{cases}
\beta^{(n)}_{i} & \mbox{if}\ i\in J^{(n)},\\
0 & \mbox{if}\ i\in I^{(n)}\setminus J^{(n)}.
\end{cases}$$
Clearly we have
$$ \sum_{k=1}^N \widetilde{\beta}_k^{(n)}\le \sum_{k=1}^N \beta_k^{(n)}.$$
Hence $\widetilde{\nu}_i^{(n)}\ge \nu_i^{(n)}$ for $i\in J^{(n)},$ which implies
$\widetilde{\nu}^{(k)}\ge\nu_J^{(k)}. $
Therefore,   the sequence $\widetilde{\beta}^{(n)}$ satisfies the assumptions of Theorem 2. Consequently
the sequence
$$ Q_{\widetilde{\beta}^{(n)}}Q_{\widetilde{\beta}
^{(n-1)}}\ldots Q_{\widetilde{\beta}^{(m)}}(x)$$ is convergent to an element of $C $ as $n\to \infty.$
The sequences $\beta^{(n)}$ and $\widetilde{\beta}^{(n)}$ satisfy
$$ \sum_{n=1}^\infty \sum_{i=1}^N |\widetilde{\beta}_i^{(n)}-\beta_i^{(n)}|<\infty.$$
Thus applying Proposition 1 concludes the proof.
\end{proof}
\noindent{\bf Example.}
Consider $N=3$  and
$$\begin{array}{lcllcllcl}
\beta_1^{(2n)}&=&{1\over n^2},&\beta_2^{(2n)}&=& {1\over n},&\beta_3^{(2n)}&=&2-{2\over n},\bigskip\\
\beta_1^{(2n+1)}&=&{1\over n },& \beta_2^{(2n+1)}&=&{1\over n^2},& \beta_3^{(2n+1)}&=&2-{2\over n }.
\end{array}$$
The scheme is 1-intermittent, i.e. $I^{(n)}=\{1,2,3\}$ for any $n.$ Observe that the
 assumptions of Theorem 2 are not satisfied. Now,
consider this scheme as 2-intermittent and let
$$ J^{(2n)}=\{2,3\}\qquad J^{(2n+1)}=\{1,3\}.$$
Then we can apply Corollary 1 to obtain that this scheme leads to the convergence
of the algorithm.

\section{Intermittent control}
The assumptions of Theorem 2 depend on the behaviour of the coefficients $\beta_i^{(n)}$ where
$i\in I^{(n)},$ i.e. those coefficients which are positive. Roughly the conclusion holds
if these coefficients are not to small and the sums $s^{(n)}=\sum_{i=1}^N\beta_i^{(n)}$ do not
approach the value 2 too fast. By Corollary 1  we can allow
some small coefficients $\beta^{(n)}_i$ by using perturbation technique.
However in special case of intermittent control and when the sums $s^{(n)}$ stay away from 2
we can entirely liberate ourselves from assumptions on all positive coefficients $\beta_i^{(n)}.$
\begin{thm}Fix  a family of convex
sets $C_1,\ldots, C_N$ with nonempty intersection $C.$ Given a sequence $ \beta^{(n)}\in
\mathcal B  $
such that $$ s^{(n)}=\sum_{i=1}^N\beta_i^{(n)}\le 2-\varepsilon,$$
for some constant $\varepsilon>0.$      Let  $I^{(n)}$ denote the set of active
indices for $\beta^{(n)} $ and let $J^{(n)}$ be a sequence of subsets
of $I^{(n)}.$  Assume that  there is a positive integer $p$ such that for any $k$ we have
$$\{1,2,\ldots,N\}\subset J^{((k-1)p)}\cup J^{(k-1)p+1)}\cup \ldots \cup J^{(kp-1)}.$$
 Let
$$\nu^{(k)}_J=\min\{\beta_i^{(n)}\ |\ (k-1)p<n \le kp,\, i\in J^{(n)}\}.$$
Assume that
$$\sum_{k=1}^\infty \nu^{(k)}_J =+\infty.$$
Then for any $x^{(0)}\in \mathbb{R}^d$ the sequence $x^{(n)}$ defined as
$$ x^{(n)}=
Q_{\beta^{(n)}}(x^{(n-1)}),\qquad n\ge 1
 $$
is convergent  to a point in $C.$
\end{thm}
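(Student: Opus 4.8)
The plan is to bypass the perturbation technique --- which is unavailable here, since the discarded coefficients $\beta_i^{(n)}$ for $i\in I^{(n)}\setminus J^{(n)}$ need not be summable --- and instead exploit the hypothesis $s^{(n)}\le 2-\varepsilon$ directly through Proposition 1. Assume with no loss of generality that $0\in C$ and set $R=\|x^{(0)}\|$; since each $Q_{\beta^{(n)}}$ is nonexpansive and fixes the points of $C$, we get $\|x^{(n)}\|\le R$ for all $n$ and $d(x^{(n)},C)$ nonincreasing, so $d(x^{(n)},C)\to r$ for some $r\ge 0$, and the goal is to show $r=0$. Applying Proposition 1 with $\kappa_j=\beta_j^{(n)}/s^{(n)}$ and using $2-s^{(n)}\ge\varepsilon$ gives, for any $c\in C$,
$$\|x^{(n)}-c\|^2\le\|x^{(n-1)}-c\|^2-\varepsilon\sum_{j=1}^N\beta_j^{(n)}\,d^2(x^{(n-1)},C_j),$$
and telescoping this yields the single estimate that will drive everything: $\sum_{m\ge1}\sum_{j=1}^N\beta_j^{(m)}\,d^2(x^{(m-1)},C_j)<\infty$.

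From this I would extract two facts. First, $\sum_j\beta_j^{(m)}d^2(x^{(m-1)},C_j)\to 0$, so by Cauchy--Schwarz and $\sum_j\beta_j^{(m)}\le 2$ the increments vanish: $\|x^{(m)}-x^{(m-1)}\|^2\le 2\sum_j\beta_j^{(m)}d^2(x^{(m-1)},C_j)\to 0$. Hence, writing $\delta_k=\max_{(k-1)p<m\le kp}\|x^{(m)}-x^{(m-1)}\|$, we have $\delta_k\to 0$, and within the $k$-th block every iterate $x^{(m)}$ with $(k-1)p\le m<kp$ lies within $p\,\delta_k$ of $z_k:=x^{((k-1)p)}$. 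Second, suppose for contradiction that $r>0$, so $d(x^{(n)},C)\ge r$ for all $n$. For each block $k$ and each $i\in\{1,\ldots,N\}$ the covering hypothesis provides a step $n_{k,i}$ with $(k-1)p<n_{k,i}\le kp$ and $i\in J^{(n_{k,i})}$; then $\beta_i^{(n_{k,i})}\ge\nu_J^{(k)}$, and since the pairs $(n_{k,i},i)$ are pairwise distinct the double sum $\sum_k\sum_i\beta_i^{(n_{k,i})}d^2(x^{(n_{k,i}-1)},C_i)$ is a subsum of the finite sum above, hence finite; therefore $\sum_k\nu_J^{(k)}\sum_i d^2(x^{(n_{k,i}-1)},C_i)<\infty$. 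As $\sum_k\nu_J^{(k)}=\infty$, there is a subsequence $k_l$ along which $\sum_i d^2(x^{(n_{k_l,i}-1)},C_i)\to 0$.

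Now combine the two facts: $x^{(n_{k_l,i}-1)}$ lies within $p\,\delta_{k_l}$ of $z_{k_l}$, so $d(z_{k_l},C_i)\to 0$ for every $i$, i.e. $\max_i d(z_{k_l},C_i)\to 0$. The $z_{k_l}$ are bounded, so along a further subsequence $z_{k_l}\to z^\ast$ with $d(z^\ast,C_i)=0$ for all $i$, whence $z^\ast\in\bigcap_i C_i=C$; but $d(z_{k_l},C)\ge r$ forces $d(z^\ast,C)\ge r>0$, a contradiction. Hence $r=0$, i.e. $d(x^{(n)},C)\to 0$; then $x^{(n)}$, being bounded, has a cluster point $c\in C$, and since $\|x^{(n)}-c\|$ is nonincreasing (Proposition 1) we conclude $x^{(n)}\to c$, exactly as at the end of the proof of Theorem 2.

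The main obstacle is conceptual rather than computational: one cannot simply zero out the coefficients indexed by $I^{(n)}\setminus J^{(n)}$ and invoke Corollary 1, because in any compactness argument those coefficients survive (they need not be summable) and keep the limiting trajectory from being stationary --- this is precisely where the proofs of Lemma 1 and Corollary 1 would break down. The device that circumvents this is the weighted global estimate $\sum_m\sum_j\beta_j^{(m)}d^2(x^{(m-1)},C_j)<\infty$, available only because $s^{(n)}$ stays away from $2$; it simultaneously makes the increments vanish --- so each length-$p$ block collapses to a single point in the limit --- and controls exactly the $J^{(n)}$-indexed distances that enter the hypothesis. A minor point to check along the way is the slightly inconsistent block indexing in the statement (the covering condition and the definition of $\nu_J^{(k)}$ should refer to the same block of $p$ consecutive indices), but it is immaterial to the argument.
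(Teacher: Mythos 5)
Your proof is correct, and it takes a genuinely different route from the paper's. The paper proves Theorem 3 by reduction: since $s^{(n)}\le 2-\varepsilon$ makes $\nu_i^{(n)}$ comparable to $\beta_i^{(n)}$ (within the factors $2\varepsilon/(2+\varepsilon)$ and $2$), it either applies Theorem 2 directly, or else observes that the block-minimizing coefficients attained outside $J^{(n)}$ form a summable family, nullifies them via Corollary 1 (the perturbation result), and iterates this surgery at most $pN$ times until Theorem 2 applies. You instead argue directly from inequality (3) (Proposition 1 with $\kappa_j=\beta_j^{(n)}/s^{(n)}$): the hypothesis $2-s^{(n)}\ge\varepsilon$ turns the telescoped Fej\'er estimate into the single summability statement $\sum_n\sum_j\beta_j^{(n)}d^2(x^{(n-1)},C_j)<\infty$, from which you extract both vanishing increments (so each length-$p$ block collapses onto $z_k=x^{((k-1)p)}$) and, via the divergence of $\sum_k\nu_J^{(k)}$ and the injectivity of $(k,i)\mapsto(n_{k,i},i)$, a subsequence of blocks along which all $N$ distances $d(z_{k_l},C_i)$ tend to zero; compactness then forces $d(x^{(n)},C)\to 0$. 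This bypasses Lemma 1, Proposition 2, Corollary 1, and the iterated-nullification bookkeeping entirely, at the cost of being specific to the intermittent, $s^{(n)}$-bounded-away-from-$2$ setting (the telescoped sum in Theorem 2's generality carries the possibly vanishing weight $2-s^{(n)}$, so your device would not work there). One small caveat: your framing that the perturbation technique is ``unavailable'' here overstates the case --- the paper does use it, but only to remove the summable minimizers rather than all of $I^{(n)}\setminus J^{(n)}$ --- though this does not affect the validity of your argument. The off-by-one mismatch between the covering condition and the definition of $\nu_J^{(k)}$ that you flag is indeed a typo in the statement and is immaterial.
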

\begin{proof}
First observe that since $s^{(n)}\le 2-\varepsilon$ we have
$${2\varepsilon\over 2+\varepsilon}\,\beta_i^{(n)}\le\nu_i^{(n)}={2\beta^{(n)}_i(2-s^{(n)})\over \beta^{(n)}_i+2-s^{(n)}}
\le 2\beta^{(n)}_i.
$$
Therefore we can replace the coefficients $\nu^{(n)}_i$ with $\beta^{(n)}_i $ when applying Theorem 2 and Corollary 1.

Let $n_k=kp.$ If for
$$\nu^{(k)}=\min\{\beta_i^{(n)}\,|\,(k-1)p<n \le kp, i\in I^{(n)}\}$$
we have
\begin{equation}\label{infty}\sum_{k=1}^\infty \nu^{(k)}=+\infty
\end{equation}
we can apply Theorem 2 to get the conclusion. Thus it suffices to consider the case when
\begin{equation}\label{last}
\sum_{k=1}^\infty \nu^{(k)}<+\infty. \end{equation}
Let
$$A=\{k\in \mathbb{N}\ |\ (\exists\, n)( \exists\, i)\ (k-1)p<n\le kp,\ i\in I^{(n)}\setminus J^{(n)},\
\nu^{(k)}=\beta_i^{(n)}\}.$$
For every $k\in A$ choose $n_k$ and $i_k$ such that
$$ (k-1)p<n_k\le kp,\quad i_k\in I^{(n)}\setminus J^{(n)},\quad \nu^{(k)}=\beta_{i_k}^{(n_k)}.$$
By (\ref{last}) we have
$$ \sum_{k\in A}^\infty \beta_{i_k}^{(n_k)}<+\infty.$$
Define the new coefficients $\widetilde{\beta}_i^{(n)}$ by nullifying the coefficients $\beta_i^{(n)}$
for $i=i_k$ and $n=n_k,$ i.e. let
$$
\widetilde{\beta}_i^{(n)}=\begin{cases}0 & \mbox{if}\ n=n_k,\ i=i_k\ \mbox{for some}\ k\\
\beta_i^{(n)} & \mbox{otherwise}
\end{cases}
$$
By construction the sums $\widetilde{s}^{(k)}$ stay away from 2 since $\widetilde{s}^{(k)}\le s^{(k)}.$
Moreover $J^{(n)}\subset \widetilde{I}^{(n)},$ where
$\widetilde{I}^{(n)} $ denote the set of active indices for $\widetilde{\beta}^{(n)}.$ By Corollary 1  the convergence of the algorithm   for the new
coefficients implies its convergence for the original ones. Thus we can restrict ourselves to the coefficients
$\widetilde{\beta}^{(n)}_i.$ Clearly for $i\in J^{(n)}$ we have
$\widetilde{\beta}^{(n)}_i= {\beta}^{(n)}_i.$
If the new coefficients satisfy (\ref{infty}) we are done by Theorem 2. If not, we can perform the same transformation
as before.  After at most $pN$ iterations we will obtain a sequence to which we can apply Theorem 2 and which differs from
original sequence as in Corollary~1.
\end{proof}

\noindent{\bf Example}
Let $N=3$ and
$$\begin{array}{lcllcllcl}
\beta_1^{(3n)}&=&1, &\beta_2^{(3n)}&=& {1\over n },& \beta_2^{(3n)}&=& {1\over n^2},\bigskip\\
\beta_1^{(3n+1)}&=& {1\over n},& \beta_2^{(3n+1)}&=&1,&\beta_2^{(3n+1)}&=& {1\over n^2},\bigskip\\
\beta_1^{(3n+2)}&=& {1\over n^2},& \beta_2^{(3n+2)}&=&{1\over n},&\beta_2^{(3n+2)}&=& 1.
\end{array}$$
We have $I^{(n)}=\{1,2,3\}.$ Let
$$ J^{(3n)}=\{1\},\quad J^{(3n+1)}=\{2\},\quad J^{(3n+2)}=\{1\}$$
and $p=3.$   We have $\nu_J^{(k)}=1$ for any $k.$ Hence all the assumptions of Theorem 3 are satisfied.

\noindent{\bf Acknowledgment.} I am very grateful to Andrzej Cegielski for his help, and in particular
  for turning my attention to the paper \cite{BB}.

\end{document}